\newtheorem{teo}{Theorem}
\newtheorem{lem}[teo]{Lemma}
\newtheorem{cor}[teo]{Corollary}
\newtheorem{quest}[teo]{Question}
\newtheoremstyle{drem}
     {3pt}
     {3pt}
     {\rmfamily}
     {}
     {\itshape}
     {:}
     {.5em}
     {}
\theoremstyle{drem}
\newtheorem{rem}[teo]{Remark}
\newtheorem{exa}[teo]{Example}
\newcommand{\eg}[0]{\emph{e.g.} }
\newcommand{\ie}[0]{\emph{i.e.} }
\newcommand{\srl}[1]{\overline{#1}}
\newcommand{\eps}[0]{\epsilon}
\newcommand{\rr}[0]{\ensuremath{\mathbb{R}}}
\newcommand{\zz}[0]{\ensuremath{\mathbb{Z}}}
\newcommand{\tran}[0]{\ensuremath{\mathsf{T}}}
\newcommand{\tr}[0]{\ensuremath{\mathrm{Tr}}}
\newcommand{\Id}[0]{\ensuremath{\mathrm{Id}}}
\newcommand{\D}[0]{\ensuremath{\mathrm{D}}}
\renewcommand{\H}[0]{\ensuremath{\mathcal{H}}}
\newcommand{\HD}[0]{\ensuremath{\mathcal{H}\mathrm{D}}}
\newcommand{\BD}[0]{\ensuremath{\mathrm{BD}}}
\newcommand{\BHD}[0]{\ensuremath{\mathrm{B}\mathcal{H}\mathrm{D}}}
\newcommand{\supp}[0]{\ensuremath{\mathrm{supp}\,}}
\newcommand{\comp}[0]{\ensuremath{\mathsf{c}}}
\newcommand{\limm}[1]{\textrm{\raisebox{.5ex}{\mbox{$\underset{#1}{\lim}$}}} \:}
\newcommand{\input{agitsur}}[0]{\input{agitsur}}
\title{Absence of non-constant harmonic functions with $\ell^p$-gradient in some semi-direct products}
\author{Antoine Gournay}
\date{\today}
\begin{document}

\maketitle

\begin{abstract}
To obtain groups with bounded harmonic functions (which are amenable), one of the most frequent way is to look at some semi-direct products (\eg lamplighter groups). The aim here is to show that many of these semi-direct products do not admit harmonic functions with gradient in $\ell^p$, for $p\in [1,\infty[$.
\end{abstract}

In \cite{moi-poiss} and \cite{moi-trans}, the author showed that many groups do not have non-constant harmonic functions with gradient in $\ell^p$ (for $p \in [1,\infty[$): \eg Liouville groups, lamplighters on $\zz^d$ with amenable lamp states, groups with infinitely many finite conjugacy classes, ... The aim of this short paper is to show that many semi-direct products (including lamplighter groups on bigger spaces) also have this property. This contrasts with the fact that all groups admit non-constant harmonic functions with gradient in $\ell^\infty$ (\ie Lipschitz) and that the groups under consideration have many non-constant bounded harmonic functions.

The graphs $\Gamma = (X,E)$ considered here will always be the Cayley graphs of finitely generated groups. The gradient of $f:X \to \rr$ is $\nabla f: E \to \rr$ defined by $\nabla f(x,y) = f(y) - f(x)$. The space of $p$-Dirichlet functions is $\D^p(\Gamma) = \{ f: X \to \rr \mid \nabla f \in \ell^p(E) \}$ and the space of harmonic functions is $\H(\Gamma) = \ker (\nabla^* \nabla)$. Harmonic functions with gradient in $\ell^p(E)$ are denoted $\HD^p(\Gamma) = \H(\Gamma) \cap \D^p(\Gamma)$, and $\BHD^p(\Gamma) = \HD^p(\Gamma \cap \ell^\infty(X)$ is the subspace of bounded such functions. 

Although these spaces depend \emph{a priori} on the generating set, an abuse of notation will be made by replacing the Cayley graph $\Gamma$ by the finitely generated group it represents: for a group $H$ and a finite generating set, $\D^p(H)$ is to be understood as the $\D^p$ space on the associated Cayley graph.
\begin{teo}\label{leteo}
Let $H$ be of growth at least polynomial of degree $d \geq 2p$ and, if $H$ is of superpolynomial growth, assume that only constant functions belong to $\BHD^q(H)$ for some $q>p$. Let $C$ be a group which is not finitely generated, assume $G = C \rtimes_\phi H$ is finitely generated and assume the hypothesis \eqref{hTC} hold. Then $\HD^p(G)$ contains only constant functions.
\end{teo}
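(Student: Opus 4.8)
The plan is to show that every $f \in \HD^p(G)$ is invariant under the normal subgroup $C$, so that it factors through the quotient $G \to G/C \cong H$, and then to use the hypotheses on $H$ to force the induced function to be constant.

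I would first fix a generating set of $G$ adapted to the splitting $G = C \rtimes_\phi H$, separating the edges of the Cayley graph into \emph{$H$-edges} and \emph{$C$-edges}; since $G$ is finitely generated while $C$ is not, finitely many $C$-generators together with their $\phi$-orbit under $H$ generate all of $C$. The crux is to prove that $f$ is (left-)invariant under $C$. Here I would use that left translations are graph automorphisms, so for $c \in C$ the function $g_c = f(c^{-1}\,\cdot\,) - f$ is again harmonic and satisfies $\|\nabla g_c\|_{\ell^p} \le 2\|\nabla f\|_{\ell^p} < \infty$, i.e. $g_c \in \HD^p(G)$; the goal is to show $g_c \equiv 0$. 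To this end I would invoke \eqref{hTC} to produce, from the $\phi$-action of $H$, an infinite family of pairwise ``independent'' $C$-translation directions along which $f$ may be telescoped, and I would compare the finite $\ell^p$-energy of $\nabla f$ against the number of parallel routes through these directions available within radius $r$, which is $\gtrsim r^d$ by the growth hypothesis. The threshold $d \ge 2p$ is precisely what makes the $p$-capacity of the relevant cuts infinite, so that finiteness of the energy forces the variation of $f$ across $C$ to vanish; harmonicity then upgrades this asymptotic invariance to exact invariance. This is the step I expect to be the main obstacle, as it requires careful bookkeeping of the overlaps of the families supplied by \eqref{hTC} and a sharp capacity estimate.

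Granting $C$-invariance, $f$ descends to $\srl f : H \to \rr$. I would then verify $\srl f \in \HD^p(H)$: its gradient is the restriction of $\nabla f$ to the $H$-edges within a single coset of $H$, whence $\|\nabla \srl f\|_{\ell^p(H)} \le \|\nabla f\|_{\ell^p(E)} < \infty$; and since the $C$-edges now carry no gradient, the restriction to a coset of $H$ of the Laplacian of $f$ coincides with the Laplacian of $\srl f$ on $H$, so $\srl f$ is harmonic. If $H$ has polynomial growth it is virtually nilpotent, hence Liouville, and a standard argument (or the earlier results of the paper) gives $\HD^p(H) = \rr$, so $\srl f$ and $f$ are constant. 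If instead $H$ has superpolynomial growth, I would first note that $G$ itself has superpolynomial growth — otherwise $G$ would be virtually nilpotent and its subgroup $C$ finitely generated — so that $f$, hence $\srl f$, is bounded by the (separately established) boundedness of $\HD^p$-functions on groups of superpolynomial growth; since $\ell^p \subseteq \ell^q$ gives $\nabla \srl f \in \ell^q(H)$, we obtain $\srl f \in \BHD^q(H)$, which is trivial by hypothesis. In either case $\srl f$, and therefore $f$, is constant.

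The delicate content is thus concentrated in the $C$-invariance step: converting the mere finiteness of the $\ell^p$-energy, via \eqref{hTC} and the capacity estimate forced by $d \ge 2p$, into the exact vanishing of the $C$-variation of $f$ rather than its decay. The reduction to $H$ and the final appeal to the growth and $\BHD^q$ hypotheses are comparatively routine, the only auxiliary input being the boundedness of $\HD^p$-functions under superpolynomial growth, which I would cite or prove beforehand.
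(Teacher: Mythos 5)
The descent half of your plan is sound, and in fact easier than you make it: once $f$ is known to be left-invariant under $C$, with the generating set $\{e_C\}\times S_H \cup S_C\times\{e_H\}$ the induced function $\srl{f}$ on $H$ is harmonic with $\nabla \srl{f}\in\ell^p$, and Remark \ref{rbhd} (Lemma \ref{tbdhnil-l} in the polynomial case, \cite[Theorem 1.2]{moi-poiss} in the superpolynomial case) makes it constant --- no boundedness detour is needed. The genuine gap is that the $C$-invariance step, which is the entire content of the theorem, is only a heuristic, and the mechanism you sketch does not match the hypotheses. A cut/capacity (path-counting) argument for functions with $\ell^p$ gradient has a threshold of the order $d>p$: for instance $\zz^d$ is $p$-parabolic exactly when $p\geq d$. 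The threshold $d\geq 2p$ in the theorem (and $d>2p$ in Lemma \ref{tbdval}) instead comes from the diffusive heat-kernel decay $\|p^{(n)}\|_{\ell^\infty}\lesssim n^{-d/2}$ fed through Young's inequality, i.e.\ from running the random walk, not from counting disjoint paths; so your claim that ``$d\geq 2p$ is precisely what makes the $p$-capacity of the relevant cuts infinite'' is incorrect, and nothing indicates how the deferred bookkeeping would close. Indeed, path-counting is exactly Georgakopoulos's method \cite{Agelos}, which, as the paper itself notes, requires the lamp groups to be finite; the boundary-value machinery of this paper exists precisely because that method does not extend to non-finitely-generated $C$.

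There is also a structural misplacement of the hypotheses. Your invariance step uses only growth, \eqref{hTC} and non-finite generation of $C$, deferring $\BHD^q(H)\simeq\rr$ to the quotient. In the paper's proof this hypothesis is consumed much earlier and fiber-by-fiber: for each $c$, the slice $f(c,\cdot)\in\D^p(H)$ (which is \emph{not} harmonic on $H$ even when $f$ is harmonic on $G$) has a boundary value $\tilde{f}(c,\cdot)=\lim_n P_H^n f(c,\cdot)$, harmonic with gradient in $\ell^q$ by Lemma \ref{tbdval}, and it is the Liouville-type hypothesis on $H$ that forces this boundary value to be a constant $\mathrm{cst}_c$; hypothesis \eqref{hTC} only serves to compare these constants across different $c$ (Theorem \ref{tcstinft}), after which \emph{every} $\D^p$ function on $G$ is shown to have a single value at infinity and the maximum principle finishes for harmonic $f$. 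If your invariance claim were really provable from growth, \eqref{hTC} and non-finite generation alone, it would apply for instance to $\zz/2\wr F_2$ for every $p$ --- a strong assertion that nothing in your sketch supports. Finally, the phrase ``harmonicity then upgrades this asymptotic invariance to exact invariance'' hides a further step: to apply the maximum principle to $g_c=f(c^{-1}\cdot)-f$ you need decay of $g_c$ at infinity in all directions of $G$, whereas your construction would at best control it along the special directions furnished by \eqref{hTC}.
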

The results actually holds for any graph quasi-isometric to a Cayley graph of $G$.

The hypothesis \eqref{hTC} (see subsection \S{}\ref{sscai}) means that elements in $C$ which are ``far away'' commute where ``far away'' can intuitively be thought of as that in order to make some specific subgroup which contain them intersect, one needs to apply a $\phi_h$ with $h \in H$ large. This hypothesis always holds for Abelian $C$, for wreath products and in some other interesting examples (see Example \ref{exsym}). The author is inclined to believe that other methods could avoid this hypothesis, however D.~Osin pointed out to the author it may not be removed.

Particular examples are lamplighter groups $L \wr H$, as long as $H$ has no bounded harmonic functions (and $d \geq 2p$). Using \cite[\S{}4]{moi-trans}, one can check that iterated wreath products (\eg $\zz \wr (\zz \wr \zz)$) have a trivial $\HD^p$. This gives a partial answers to a problem of Georgakopoulos \cite[Problem 3.1]{Agelos}, see \S{}\ref{s-exe}. See also \S{}\ref{s-comq} below for more details and \cite{moi-wreath} for more results on lamplighter groups.

Together with \cite[Theorem 1.4]{moi-poiss}, this results indicates that some extension operations should be avoided to construct groups with harmonic functions with gradient in $\ell^p$ out of groups which do not have them. In fact, using \cite[Theorem 1.2]{moi-poiss}, an open question of Gromov can be stated as: is it true that any finitely generated amenable group $G$ has only constant functions in $\HD^p(G)$ (for any $p \in ]2,\infty[$. For further questions and comments see \S{}\ref{s-comq} below.

Lemma \ref{tbdhnil-l} shows there are no harmonic functions with gradient in $c_0$ in groups of polynomial growth. It would be nice to have an amenable groups where this fails. Recall that for $p=2$, this result can be interpreted in terms of [reduced] $\ell^2$-cohomology in degree one (or first $\ell^2$-Betti numbers). For links between the current results and [reduced] $\ell^p$-cohomology [in degree $1$], the reader is directed to \cite{moi-poiss}.

{\it Acknowledgements}: I wish to thank an anonymous referee for many useful comments and important corrections, as well as D.~Osin for pointing out an example where \eqref{hTC} cannot be removed.

\section{Proof}

\newcommand{\lf}[0]{\lfloor}
\newcommand{\rf}[0]{\rfloor}
\newcommand{\fl}[1]{\lfloor #1 \rfloor}

\subsection{Boundary values}

The following lemma, taken from \cite{moi-poiss}, will come in handy. Let $\BD^p(\Gamma) = \D^p(\Gamma) \cap \ell^\infty(X)$ be the space of bounded functions in $\D^p$. 
\begin{lem}\label{tbdval}
Let $g \in \D^p(H)$ and $H$ be a group of growth at least polynomial of degree $d > 2p$. Let $P_H$ be the random walk operator on $H$ (for some finite generating set). Then $\tilde{g} = \lim_{n \to \infty} P^n_H g$ exists and there is a constant $K_1$ depending on the isoperimetric profile (in particular, possibly on $d$)
such that
\[
\|g - \tilde{g}\|_{\ell^\infty} \leq K_1 \|g\|_{\D^p}.
\]
Furthermore $\tilde{g} \in \D^q(H)$ for all $q \in \big[ \frac{pd}{d-2p}, \infty \big]$. If $g \in \BD^p(H)$ then $\tilde{g} \in \BD^q(H)$. 
\end{lem}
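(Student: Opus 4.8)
The plan is to realise $\tilde g$ as $g$ minus a potential produced by the Green operator, and then to read every assertion off the ultracontractivity estimates furnished by the isoperimetric profile. Writing $\Delta = \Id - P_H$ and telescoping, one has $P_H^N g = g - \sum_{n=0}^{N-1} P_H^n \Delta g$, so that formally $g - \tilde g = \mathcal{G}\Delta g$ with $\mathcal{G} = \sum_{n\ge 0} P_H^n$ the Green operator. First I would observe that $\Delta g = -\tfrac1{|S|}\sum_{s\in S}\nabla g(\cdot,s)$, whence $\|\Delta g\|_{\ell^p}\le\|\nabla g\|_{\ell^p}=\|g\|_{\D^p}$ and in particular $\Delta g\in\ell^p(X)$.

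The engine is the ultracontractive bound $\|P_H^n\|_{\ell^p\to\ell^q}\le C\,n^{-\frac d2(\frac1p-\frac1q)}$ for $p\le q$, which holds because growth at least polynomial of degree $d$ forces the dimension-$d$ isoperimetric (Nash) inequality by the Coulhon--Saloff-Coste--Varopoulos chain; the constant $C$, hence $K_1$, depends only on the profile. Summing against $\Delta g$ in the $\ell^\infty$ endpoint, $\sum_n\|P_H^n\|_{\ell^p\to\ell^\infty}\le C\sum_n n^{-d/(2p)}$ converges precisely when $d>2p$. This simultaneously shows that $P_H^N g$ converges uniformly to some $\tilde g$ and yields $\|g-\tilde g\|_{\ell^\infty}=\|\mathcal{G}\Delta g\|_{\ell^\infty}\le\|\mathcal{G}\|_{\ell^p\to\ell^\infty}\|\Delta g\|_{\ell^p}\le K_1\|g\|_{\D^p}$. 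That $\tilde g$ is harmonic is then immediate from $P_H\tilde g=\lim_N P_H^{N+1}g=\tilde g$.

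For the integrability statement I would run the same summation with $\ell^\infty$ replaced by $\ell^q$: the series $\sum_n\|P_H^n\|_{\ell^p\to\ell^q}$ converges as soon as $\tfrac d2(\tfrac1p-\tfrac1q)>1$, i.e. $q>\frac{pd}{d-2p}$, so $\mathcal{G}\colon\ell^p\to\ell^q$ is bounded and $g-\tilde g\in\ell^q$; the borderline exponent $q=\frac{pd}{d-2p}$ is recovered through the weak-type (Lorentz/Marcinkiewicz) refinement of the heat-kernel bound. Since the Cayley graph has bounded degree, $\|\nabla h\|_{\ell^q}\le C_q\|h\|_{\ell^q}$ for every $h$, so $\nabla(g-\tilde g)\in\ell^q$; combined with $\nabla g\in\ell^p\subseteq\ell^q$ (counting measure, $q\ge p$) this gives $\nabla\tilde g=\nabla g-\nabla(g-\tilde g)\in\ell^q$, i.e. $\tilde g\in\D^q$ for all $q\in[\frac{pd}{d-2p},\infty]$. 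Finally, if $g\in\BD^p$ then $\|P_H^n g\|_{\ell^\infty}\le\|g\|_{\ell^\infty}$ by the Markov property, so the uniform limit $\tilde g$ is bounded and lands in $\BD^q$.

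The main obstacle I anticipate is twofold: securing the ultracontractive machinery in the stated generality — the hypothesis is only a lower volume bound, so $H$ may have superpolynomial growth and one must invoke the dimension-$d$ Nash inequality rather than two-sided Gaussian estimates — and handling the endpoint $q=\frac{pd}{d-2p}$, where the naive series for $\|\mathcal{G}\|_{\ell^p\to\ell^q}$ only just diverges and the sharp (Lorentz-space) form of the estimate is needed. Subsidiary care is required for the convergence of $\mathcal{G}$ (transience, guaranteed by $d>2p>2$), for the legitimacy of the telescoping and interchange of sums, and for a possible parity obstruction, which I would bypass by replacing $P_H$ with the lazy walk $\tfrac12(\Id+P_H)$.
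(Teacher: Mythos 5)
Your proposal is correct and follows essentially the same route as the paper: the identical telescoping $g - P^n g = \sum_i P^i(-\Delta g)$, heat-kernel decay $n^{-d/2}$ from the growth/isoperimetric hypothesis, summation of the resulting $\ell^p\to\ell^q$ bounds (your ultracontractivity estimates are exactly the operator-norm form of the paper's interpolation-plus-Young's-inequality step, since $P^n$ is convolution by $p^{(n)}$), and the same final decomposition $\nabla\tilde g = \nabla g - \nabla(g-\tilde g)$ with boundedness of $\tilde g$ from the Markov property. Your flagged care points (the endpoint $q=\frac{pd}{d-2p}$ needing the sharp Green-kernel mapping property, and laziness to avoid parity issues) are genuine subtleties that the paper handles implicitly by invoking boundedness of the Green kernel from $\ell^p$ to $\ell^q$.
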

\begin{proof}
Let $g_n = P^n g$, then
\[
g - g_n = g - P^n g = \sum_{i=0}^{n-1} P^i g - P^{i+1} g = \sum_{i=0}^{n-1} P^i (I-P) g = \sum_{i=0}^{n-1} P^i (- \Delta g).
\]
But if $g \in D^p(H)$ then $\Delta g \in \ell^p(H)$. 

Let $p^{(i)} = P^i \delta_e$ where $\delta_e$ is the Dirac mass at the identity element of $H$. Note that the above expression reads $g -g_n = (-\Delta g) * \big(  \sum_{i=0}^{n-1} p^{(i)} \big)$. 

On the other hand if $H$ has polynomial growth of degree at least $d$ then $\|p^{(i)}\|_{\ell^r} \leq K n^{-d/2r'}$ where $r'$ is the H\"older conjugate of $r$. Indeed, use Varopoulos to have a bound on the $\ell^\infty$ norm: $\|p^{(n)}\|_{\ell^\infty} \leq K_2 n^{-d/2}$. The $\ell^1$-norm is always $1$. Interpolate to get the $\ell^r$ norm:
\[
\| p^{(n)}\|_{\ell^r}^r = \sum p^{(n)}(\gamma)^r \leq \|p^{(n)}\|^{r-1}_\infty \| p^{(n)}\|_1 \leq K_2^{r-1} n^{\tfrac{-d}{2} (r-1)}
\]
Recall that for $r,r_1,r_2 \in \rr_{\geq 1} \cup \{\infty\}$ satisfying $1+\tfrac{1}{r} = \tfrac{1}{r_1}+\tfrac{1}{r_2}$, Young's inequality (see \cite[Theorem 0.3.1]{Sogge}) gives $\|f*g \|_r \leq \|f \|_{r_1} \|g\|_{r_2}$. Applying this inequality to $g-g_n = p^{(n)} * (-\Delta g)$, one deduces convergence (of $g-g_n$ in $\ell^q$-norm) and 
\[
\|g - \tilde{g}\|_{\ell^q} \leq K_1(q) \|g\|_{\D^p}
\]
for $q \in \big[ \frac{pd}{d-2p} , \infty \big]$ and $K_1(q)$ is the product of the norm of $\nabla^*$ (from $\ell^p$ of the edges to $\ell^p$ of the vertices) and of Green's kernel (from $\ell^p(H) \to \ell^q(H)$). Thus
\[
\|g - \tilde{g}\|_{\ell^\infty} \leq K_1 \|g\|_{\D^p},
\]
where $K_1=K_1(\infty)$. For the last assertion, note that $\tilde{g}$ is harmonic bounded (given $g$ is bounded) and has gradient in $\ell^q$ (being the sum of a function in $\D^p(H)$ and a function in $\ell^q(H)$).
\end{proof}

\subsection{Some slicing and a reduction}

Take $G = C \rtimes_\phi H$. Assume $H$ is finitely generated (by $S_H$) and there is a finite set $S_C$ such that $\phi_H(S_C) := \{\phi_z(c) \mid z \in H, c \in S_C \}$ generates $C$. For $G$, consider the generating set $\{e_C\} \times S_H \cup S_C \times \{e_H\}$. This will turn out to be unimportant but makes the proof much simpler.

Recall $c_0$ is the closure of finitely supported functions in $\ell^\infty$: for some countable set $Y$, 
\[
c_0(Y) := \{ f:X \to \rr \mid \forall \eps>0, \exists F \subset Y \text{ finite such that } \|f\|_{\ell^\infty(X \setminus F} < \eps\}. 
\]
For $z \in H$ let $|z|$ be the word length of $z$ (for $S_H$). Let $\phi_F(S_C) := \{\phi_z(c) \mid z \in F, c \in S_C \}$. For $c \in C$, let $|c|$ be the word length for [the infinite alphabet] $\phi_H(S_C)$. Let 
\[
\lfloor c \rfloor := \min \{r \in \zz_{\geq 0} \mid c \text{ belongs to the subgroup generated by } \phi_{S_H^r}(S_C)  \}.
\]
Let $\supp c$ be union of the sets $F \subset H$ such that $c$ belongs to the group generated by $\phi_F(S_C)$, $F$ is minimal with respect to inclusion and $\max_{f \in F} |f| \leq \lfloor c \rfloor$. 

\begin{lem}\label{tepshl}
Let $f \in \D^p(\Gamma)$. There exists a sequence $\eps'_n$ of positive real numbers tending to $0$ 
\[
\forall c \in C, \forall s \in S_C \quad  |f(c,z) - f(c \cdot \phi_z(s), z)| < \eps'_{|z|}.  
\]
\end{lem}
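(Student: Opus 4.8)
The plan is to use only the summability of the gradient along the ``horizontal'' edges, namely those coming from the generators in $S_C \times \{e_H\}$. Recall that in $G = C \rtimes_\phi H$ one has $(c,z)\cdot(s,e_H) = (c\cdot\phi_z(s),z)$, so the oriented edge of $\Gamma$ issued from $(c,z)$ in the direction $(s,e_H)$ joins $(c,z)$ to $(c\cdot\phi_z(s),z)$, and the associated gradient is exactly $f(c\cdot\phi_z(s),z) - f(c,z)$. Since $f \in \D^p(\Gamma)$, summing $|\nabla f|^p$ over this sub-collection of the edges of $\Gamma$ is finite:
\[
\sum_{z \in H} \sum_{c \in C} \sum_{s \in S_C} \big| f(c\cdot\phi_z(s),z) - f(c,z)\big|^p \;\leq\; \|\nabla f\|_{\ell^p(E)}^p < \infty .
\]

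First I would group this sum according to the $H$-coordinate $z$. For $z \in H$ set
\[
A_z := \sum_{c \in C} \sum_{s \in S_C} \big| f(c\cdot\phi_z(s),z) - f(c,z)\big|^p ,
\]
so that $A_z \leq \|\nabla f\|_{\ell^p}^p < \infty$ for each $z$ while $\sum_{z\in H} A_z < \infty$. Because every summand is non-negative, each individual horizontal difference is dominated by its block: for all $c\in C$, $s\in S_C$ one has $\big| f(c\cdot\phi_z(s),z) - f(c,z)\big| \leq A_z^{1/p}$. This holds even though $C$ is infinite and each $A_z$ is itself an infinite sum — non-negativity is all that is needed.

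The key point is to convert convergence of $\sum_z A_z$ into \emph{decay in} $|z|$, and this is exactly where the finite generation of $H$ enters, through the finiteness of the balls $\{z : |z|\le R\}$. Put $b_n := \sup\{A_z^{1/p} : |z| \geq n\}$, a finite non-increasing sequence. Given $\eps>0$, since $\sum_z A_z<\infty$ there is a finite $F\subset H$ with $\sum_{z\notin F}A_z<\eps^p$; enlarging $F$ to a ball $\{|z|\le R\}$ shows $A_z<\eps^p$ for all $|z|>R$, whence $b_{R+1}\le\eps$. Thus $b_n\to 0$. It then suffices to take $\eps'_n := b_n + 2^{-n}$, which is strictly positive and tends to $0$, since for every $c\in C$, $s\in S_C$ and $z\in H$ the two bounds above give
\[
\big| f(c,z) - f(c\cdot\phi_z(s),z)\big| \leq A_z^{1/p} \leq b_{|z|} < \eps'_{|z|} .
\]
The argument is essentially bookkeeping; the only genuinely structural ingredient — and the step I would be most careful about — is that passing from a convergent sum indexed by $H$ to uniform decay $b_{|z|}\to 0$ relies on the balls of $H$ being finite, so that ``cofinitely many $z$'' can be read as ``all $z$ with $|z|$ large enough''.
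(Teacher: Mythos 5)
Your proof is correct and takes essentially the same route as the paper's: restrict the $\D^p$-norm to the horizontal edges, group the sum by the $H$-coordinate $z$ into blocks $A_z$, dominate each individual difference by $A_z^{1/p}$, and convert summability over $z$ into decay in $|z|$ using the finiteness of balls in $H$. The paper compresses that last step into the remark that $\ell^p\subset c_0$; your explicit sequence $b_n$ and the $+2^{-n}$ correction merely make the bookkeeping and the strict inequality precise.
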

\begin{proof} Writing the terms in $\nabla f$ gives
\[
\|f\|_{\D^p(G)}^p = \sum_{z \in Z} \sum_{c \in C} \bigg( \sum_{ s \in S_C} |f(c \cdot \phi_z(s),z)-f(c,z)|^p + \sum_{ s \in S_H} |f(c ,z \cdot h)-f(c,z)|^p \bigg)
\]
This implies that $\sum_{c \in C, s \in S_C} |f(c \cdot \phi_z(s),z)-f(c,z)|^p$ tends to $0$ in $z$. Since $\ell^p \subset c_0$, one has that $|f(c \cdot \phi_z(s),z)-f(c,z)|$ tends to  $0$ uniformly in $c$ and $s \in S_C$.
\end{proof}
Similarly:
\begin{lem}\label{tepscl}
Let $f \in \D^p(\Gamma)$. There exists a sequence $\eps_n$ of positive real numbers tending to $0$ so that 
\[
\|f(c,\cdot)\|_{\D^p(H)} \leq \eps_{\fl{c}}.
\]
\end{lem}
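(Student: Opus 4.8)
The plan is to recognise the quantity $\|f(c,\cdot)\|_{\D^p(H)}^p$ as exactly the part of the Dirichlet energy $\|f\|_{\D^p(G)}^p$ carried by the $S_H$-edges sitting inside the single slice $\{c\}\times H$. Reading off the $S_H$-summand of the energy expansion already written in the proof of Lemma~\ref{tepshl}, and noting that the slices $\{c\}\times H$ are edge-disjoint for distinct $c$, one obtains
\[
\sum_{c \in C} \|f(c,\cdot)\|_{\D^p(H)}^p \;=\; \sum_{c \in C}\sum_{z \in H}\sum_{h \in S_H} |f(c,zh)-f(c,z)|^p \;\leq\; \|f\|_{\D^p(G)}^p \;<\; \infty.
\]
So the first step is simply to establish this convergent bound; it is the exact analogue, with the countable index set $C$ in place of $H$, of the estimate driving Lemma~\ref{tepshl}.

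Then I would exploit the convergence just as ``$\ell^p \subset c_0$'' was used before. Convergence of $\sum_c \|f(c,\cdot)\|_{\D^p(H)}^p$ means that for every $\eta>0$ the set $\{c : \|f(c,\cdot)\|_{\D^p(H)} \geq \eta\}$ is finite. Here one uses that $\fl{c}<\infty$ for every $c$ (each $c$ is a finite word over the alphabet $\phi_H(S_C)$, and such a word involves only letters $\phi_z(s)$ with $|z|$ bounded, so $c$ lies in the group generated by $\phi_{S_H^r}(S_C)$ for some finite $r$). Consequently $\fl{\cdot}$ is bounded on any finite subset of $C$; applying this to the finite set above yields $N_\eta := \max\{\fl{c} : \|f(c,\cdot)\|_{\D^p(H)} \geq \eta\}$, so that $\fl{c} > N_\eta$ forces $\|f(c,\cdot)\|_{\D^p(H)} < \eta$. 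Setting
\[
\eps_n := 2^{-n} + \sup\{\|f(c,\cdot)\|_{\D^p(H)} : \fl{c} = n\},
\]
each slice norm is bounded by $\|f\|_{\D^p(G)}$ so the supremum is finite, the term $2^{-n}$ keeps $\eps_n>0$, and the preceding sentence gives $\eps_n \leq \eta + 2^{-n}$ for $n>N_\eta$, whence $\eps_n\to 0$. By construction $\|f(c,\cdot)\|_{\D^p(H)} \leq \eps_{\fl{c}}$, as required.

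The only genuinely delicate point I anticipate is the passage from ``the sublevel set is finite'' to ``the bound is governed by $\fl{\cdot}$'': this rests entirely on $\fl{\cdot}$ being finite-valued, equivalently bounded on finite sets, which is precisely where finite generation of $H$ and the definition of $\phi_H(S_C)$ enter. Were $\fl{\cdot}$ to fail to be finite-valued, one could not organise the slices by a single integer parameter tending to infinity and the indexing sequence $\eps_n$ would not exist; otherwise the argument is a routine transcription of the $c_0$-type reasoning of Lemma~\ref{tepshl}.
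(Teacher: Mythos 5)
Your proof is correct and takes essentially the same route as the paper: the slice-wise energy bound $\sum_{c \in C}\|f(c,\cdot)\|_{\D^p(H)}^p \leq \|f\|_{\D^p(G)}^p$ followed by the ``$\ell^p \subset c_0$'' observation that the terms of a convergent sum of $p$-th powers tend to $0$. The only difference is that you make explicit the step the paper leaves implicit, namely how to convert finiteness of the superlevel sets $\{c : \|f(c,\cdot)\|_{\D^p(H)} \geq \eta\}$ into a sequence $\eps_n \to 0$ indexed by $\lfloor c \rfloor$, using that $\lfloor \cdot \rfloor$ is finite-valued; this is a worthwhile clarification but not a different argument.
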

\begin{proof}
$\forall c \in C$, $f(c,\cdot) \in \D^p(H)$, and $\|f\|_{\D^p(G)}^p \geq \sum_{c \in C} \|f(c,\cdot)\|_{\D^p(H)}^p$. Again, the terms in this sum tend to $0$ (formally, we use again that $\ell^p \subset c_0$).
\end{proof}

The following lemma is probably well-known. 
\begin{lem}\label{tbdhnil-l}
Assume $H$ has polynomial growth. Then there are no harmonic functions with gradient in $c_0$.
\end{lem}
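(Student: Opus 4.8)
The plan is to turn the $c_0$ hypothesis on $\nabla u$ into a \emph{sublinear growth} statement for $u$ itself, and then to annihilate every increment of $u$ by averaging it against the heat kernel. Throughout let $u\in\H(H)$ with $\nabla u\in c_0(E)$, normalised so that $u(e)=0$; the goal is $u\equiv 0$.

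First I would observe that $\nabla u\in c_0$ forces $u$ to grow sublinearly. Since only finitely many edges meet any ball, a $c_0(E)$-function satisfies $a_m:=\sup\{|\nabla u(f)| : d(e,f)\ge m\}\to 0$. Telescoping along a geodesic $e=x_0,x_1,\dots,x_n=x$ then gives $|u(x)|\le\sum_{i=1}^{n}|\nabla u(x_{i-1},x_i)|\le\sum_{m=0}^{|x|-1}a_m$, and as $a_m\to0$ Cesàro summation yields $|u(x)|=o(|x|)$; more precisely $|u(y)|\le\Psi(|y|)$ with $\Psi$ nondecreasing and $\Psi(r)/r\to0$. Recentring the same bound gives, for each fixed $x$, a modulus $|u(xk)|\le C_x+\eta_x(|k|)\,|k|$ with $\eta_x(r)\to0$.

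The core step writes increments of $u$ through the heat kernel. As $u$ is harmonic, $u=P^nu$ for every $n$, so with $p^{(n)}(k)=P^n\delta_e(k)$ and a change of variable one gets, for all $n$, the identity $u(xg)-u(x)=\sum_k\big(p^{(n)}(g^{-1}k)-p^{(n)}(k)\big)\,u(xk)$. Bounding $|u(xk)|$ by the sublinear modulus above splits the right-hand side into a bounded piece $C_x\,\|(\delta_g-\delta_e)\ast p^{(n)}\|_{\ell^1}$ and a growing piece $\sum_k|p^{(n)}(g^{-1}k)-p^{(n)}(k)|\,\eta_x(|k|)|k|$. The bounded piece tends to $0$ because a group of polynomial growth is virtually nilpotent, hence Choquet--Deny, so the $0$--$2$ law gives $\|(\delta_g-\delta_e)\ast p^{(n)}\|_{\ell^1}\to0$. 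Since the left-hand side $u(xg)-u(x)$ does not depend on $n$, showing that both pieces vanish as $n\to\infty$ forces $u(xg)=u(x)$ for every $x$ and every generator $g$, i.e.\ $u$ is constant.

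The main obstacle is exactly the growing piece, and it is where quantitative information beyond the $0$--$2$ law is needed, precisely because $u$ is only sublinear rather than bounded: the heat-kernel difference must be weighted by $|k|$, and the decay has to beat this linear weight. Here I would invoke the Gaussian-type gradient estimate available on groups of polynomial growth (Varopoulos--Saloff-Coste--Hebisch), of the shape $|p^{(n)}(g^{-1}k)-p^{(n)}(k)|\le\tfrac{C|g|}{\sqrt n}\big(p^{(cn)}(k)+p^{(cn)}(g^{-1}k)\big)$, together with the concentration of the walk at scale $|k|\asymp\sqrt n$. Then the growing piece is at most $\tfrac{C|g|}{\sqrt n}\sum_k p^{(cn)}(k)\,\eta_x(|k|)|k|\lesssim|g|\,\eta_x(c\sqrt n)\to0$, since $\sum_k p^{(cn)}(k)|k|\asymp\sqrt n$ while $\eta_x(r)\to0$. (Alternatively one may simply cite the known fact that a harmonic function of sublinear growth on a group of polynomial growth is constant, which is precisely the statement reached after the first step; the $1/\sqrt n$ gain in the gradient estimate is what makes this elementary input effective against the unbounded modulus.)
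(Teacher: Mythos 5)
Your proof is correct in outline, but it follows a genuinely different route from the paper. The paper never passes through growth estimates on $u$: it invokes the Colding--Minicozzi/Kleiner theorem that a group of polynomial growth has a \emph{finite-dimensional} space of Lipschitz harmonic functions, and then shows that a non-constant harmonic $f$ with $\nabla f \in c_0$ would contradict this, because one can choose translates $\lambda_{\gamma_i} f$ whose gradients, restricted to a suitable finite set of edges, form an almost-orthonormal system (a near-identity matrix), so the translates span spaces of arbitrarily large dimension. Your argument instead converts the $c_0$ hypothesis into sublinear growth of $u$ (the Ces\`aro/telescoping step, which is correct) and then reduces to the statement that sublinear-growth harmonic functions on groups of polynomial growth are constant --- a fact the paper itself acknowledges, in the remark immediately following the lemma, as a corollary of Meyerovitch--Perl--Tointon--Yadin (its reference \cite{MPTY}); so the citation fallback you offer is legitimate and in fact yields a \emph{stronger} conclusion than the lemma (it also kills Lipschitz harmonic functions of sublinear growth, which the paper's soft argument cannot). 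What each approach buys: the paper's proof is softer and needs only finite-dimensionality plus an elementary linear-algebra estimate on near-identity matrices, whereas yours needs quantitative heat-kernel input (Hebisch--Saloff-Coste-type gradient bounds $|p^{(n)}(g^{-1}k)-p^{(n)}(k)|\lesssim \frac{|g|}{\sqrt n}\,p^{(cn)}(\cdot)$ and diffusive displacement $\sum_k p^{(n)}(k)|k|\asymp\sqrt n$) or the MPTY theorem as a black box. One technical point you should fix if you run the heat-kernel argument in full: on a bipartite Cayley graph the stated gradient estimate fails for parity reasons, so you must replace $P$ by the lazy operator $\tfrac12(I+P)$ (harmonicity is preserved) before invoking those bounds; also, take your modulus $\eta_x$ non-increasing and split the sum $\sum_k p^{(cn)}(k)\eta_x(|k|)|k|$ at a radius $R(n)\to\infty$ with $R(n)=o(\sqrt n)$ to justify that it is $o(\sqrt n)$.
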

\begin{proof}
It is known by the works of Colding \& Minicozzi \cite{CM} (see also Kleiner \cite[Theorem 1.4]{Kleiner}) that groups of polynomial growth have a finite dimensional space of harmonic functions with gradient in $\ell^\infty$. Recall that $\lambda_\gamma f(x) := f(\gamma x)$. Since left-multiplication is an isometry of the Cayley graph, $\lambda_\gamma f$ is harmonic if and only if $f$ is. Furthermore, their gradients are the same up to this shift. Given $f$ non-constant with gradient in $c_0$, the aim is to show that the $\lambda_\gamma f$ span a vector space of arbitrarily large dimension. 

To do this, note that there is some edge $e \in E$ so that, up to multiplying $f$ by a constant, $\nabla f(e) =1$ (this is possible since $f$ is non-constant). For any $\eps$, let $\gamma_1$ be so that $|\nabla (\lambda_{\gamma_1} f) (e)| < \eps/2$, $|\nabla f( \gamma_1^{-1} e)| < \eps/2$. This is possible since $\nabla f \in c_0(E)$. Pick $\gamma_2$ so that $\lambda_{\gamma_2} f$ has gradient $<\eps/4$ at $e$ and $\gamma_1^{-1}e$ while $f$ and $\lambda_{\gamma_1} f$ have gradient $< \eps/4$ at $\gamma_2^{-1}e$. Continue this similarly to get a sequence $\gamma_i$ with ``errors'' $\eps/2^i$. Restricting to the edges $\gamma_i^{-1} e$ one sees vectors of the form:
\[
\begin{array}{ccccc}
 1 & \eps/2 & \eps/4 & \eps/8 & \ldots \\
 \eps/2 & 1 & \eps/4 & \eps/8 & \ldots \\
 \eps/4 & \eps/4 & 1 & \eps/8 & \ldots \\
 \eps/8 & \eps/8 & \eps/8 & 1 & \ldots \\
 \vdots & \vdots & \vdots & \vdots & \ddots
\end{array}
\]
Now if $L : \rr^N \to \rr^N$ is a linear map such that $\| L \vec{e}_i - \vec{e}_i \|_{\ell^2(N)} \leq \eps$ for $\{\vec{e}_i\}_{1 \leq i \leq N}$ the usual basis of $\rr^N$. Then a standard exercises shows $\dim \ker L \leq \eps^2 N$. 

Indeed, let $\{\vec{v}_i\}_{1 \leq i \leq N}$ be an orthogonal basis of $\rr^N$, and $M : \rr^N \to \rr^N$ be a linear map such that $M \vec{v}_i = \vec{v}_i$ for $1 \leq i \leq k$ then $k \leq \| M \|_{\ell^2(N^2)}^2$. This is because the $\ell^2(N^2)$ norm for these matrices can also be expressed by $\tr M^\tran M$ and is consequently independent of the choice of orthogonal basis. As $M\vec{v}_i = \vec{v}_i$ for $1 \leq i \leq k$, a simple computation yields $k \leq \tr M^t M = \|M\|_{\ell^2(N^2)}^2$.

Let $\dim \ker L = k$ and $M = L -\Id$. Since there is an orthogonal basis of $\rr^N$ such that the first $k$ elements actually form a basis of $\ker L$, this implies $\|M\|_{\ell^2(N^2)}^2 \geq k$. On the other hand, $\|M\|_{\ell^2(N^2)}^2  = \sum_i \|M \vec{e}_i\|_{\ell^2(N)}^2 \leq N \eps^2$. It follows that  $k \leq \eps^2 N$ as claimed.

This means that the space spanned by $\{ \lambda_{\gamma_i} f \}_{i=1}^{N}$ is of dimension at least $(1-\eps^2) N$. In particular there is an infinite dimensional space of Lipschitz harmonic functions. This implies the group is not of polynomial growth.
\end{proof}
The preceding lemma does not \emph{a priori} exclude the existence of Lipschitz harmonic functions with sublinear growth (this is a corollary of \cite[Theorem 1.3]{MPTY}).
\begin{rem}\label{rbhd}
Before moving on, it is necessary to note that groups of polynomial growth are Liouville, \ie they have no non-constant bounded harmonic functions. Thus for such groups $H$, $\BHD^q(H) \simeq \rr$ for any $q \in [1,\infty]$. In fact, if $q<\infty$, then $\HD^q(H)$ contains only constants, by Lemma \ref{tbdhnil-l}.

If $H$ has growth at least polynomial of degree $d >2p$ and $q \in \big] \frac{dp}{d-2p}, \infty \big]$, \cite[Theorem 1.2]{moi-poiss} shows that $\BHD^q(H) \simeq \rr$ implies $\HD^p(H) \simeq \rr$. In particular, if $H$ has superpolynomial growth, $q>p$ and $\BHD^q(H) \simeq \rr$ implies $\HD^p(H) \simeq \rr$.
\end{rem}

\subsection{Constant at infinity}\label{sscai}

Let us state the hypothesis \eqref{hTC} that is required on $\phi$ and $C$. It states that for any finite $F \subset H$, there are infinitely many $z$ so that there exists a $s$ satisfying $\phi_z(s)$ is not in the subgroup generated by $\phi_F(S_C)$ and 
\[\tag{TC} \label{hTC}
\text{for all } w \in F \text{ and } s' \in  S_C, \quad [\phi_z(s),\phi_w(s')] = 1. 
\]
The easiest case where this hypothesis hold is when $C$ is Abelian. In wreath products, this is also verified since $C$ is a direct product. For another common example see Example \ref{exsym}.

Assume $H$ is either of polynomial growth or has superpolynomial growth and $\BHD^q(H) \simeq \rr$ for some $q>p$. For $f \in \D^p(G)$ and $c \in C$, let $\tilde{f}(c,\cdot) = \limm{n \to \infty} P^n_H f(c,\cdot)$, where $P^n_H$ is the random walk operator restricted to $H$. Thus, by Lemma \ref{tepscl}, Lemma \ref{tbdval} and Remark \ref{rbhd}, $\| f(c,\cdot) - \text{cst}_c\|_{\ell^\infty(H)} \leq K_1 \eps_{\fl{c}}$, where $\text{cst}_c$ is the constant function $\tilde{f}(c,\cdot)$.

Given a set $F \subset H$, let $\srl{F} = \{ z \in H \mid \phi_z(S_C) \subset \phi_F(S_C) \}$. If $C$ is not finitely generated, then $H \setminus \srl{F}$ is infinite for any finite set $F$.
\begin{teo}\label{tcstinft}
Let $H$ be as above (\ie either of polynomial growth $d>2p$ or of superpolynomial growth and $\BHD^q(H) \simeq \rr$ for some $q>p$). Assume $C$ is not finitely generated.
Let $f \in \D^p(\Gamma)$, and define $\bar{f}: C \to \rr$ by $\bar{f}(c)$ is equal to the constant of the constant function $ \tilde{f}(c, \cdot)$. Then 
\[
|\bar{f}(c_1) - \bar{f}(c_2)| \leq K_1 ( \eps_{\fl{c_1}} + \eps_{\fl{c_2}} ).
\]
In particular, $\limm{\fl{c} \to \infty} \bar{f}(c)$ exists.
\end{teo}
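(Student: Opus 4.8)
The plan is to bound $|\bar f(c_1)-\bar f(c_2)|$ by appending a far, commuting ``carrier'' lamp that pushes both configurations out to infinity in $C$, where the otherwise expensive near toggles relating $c_1$ to $c_2$ become negligible. Throughout I use the estimate recalled just before the statement, namely $\|f(c,\cdot)-\bar f(c)\|_{\ell^\infty(H)}\leq K_1\eps_{\fl c}$, which says every slice sits within $K_1\eps_{\fl c}$ of the constant $\bar f(c)$. Fix a finite $F\subset H$ with $c_1,c_2\in\gen{\phi_F(S_C)}$ and set $k=c_1^{-1}c_2$, a word $\phi_{w_1}(s_1)^{\eta_1}\cdots\phi_{w_m}(s_m)^{\eta_m}$ with all $w_j\in F$. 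By \eqref{hTC} I choose a sequence $z=z_n$ with $|z_n|\to\infty$ and $t=t_n=\phi_{z_n}(s)$ such that $t\notin\gen{\phi_F(S_C)}$ and $t$ commutes with every element of $\phi_F(S_C)$, hence with $c_1$, $c_2$ and $k$.

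First I would dispose of the two carrier steps. The edge $(c_i,z)\to(c_it,z)$ at height $z$ gives, by Lemma \ref{tepshl}, $|f(c_i,z)-f(c_it,z)|\leq\eps'_{|z|}$; combined with the slice estimate at both endpoints this yields
\[
|\bar f(c_i)-\bar f(c_it)|\leq K_1\eps_{\fl{c_i}}+\eps'_{|z|}+K_1\eps_{\fl{c_it}}.
\]
Since $t$ is a genuinely new far lamp, $\fl{c_it}\to\infty$ as $|z|\to\infty$, so the last two terms vanish and $\limsup_n|\bar f(c_i)-\bar f(c_it)|\leq K_1\eps_{\fl{c_i}}$.

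The heart of the argument is to show $|\bar f(c_1t)-\bar f(c_2t)|\to0$. Because $t$ commutes with $k$, one has $c_2t=c_1t\,k$, so I can pass from $(c_1t,e_H)$ to $(c_2t,e_H)$ along a path that inserts the factors of $k$ one at a time: to insert $\phi_{w_j}(s_j)^{\eta_j}$ I move the $H$-coordinate to $w_j$ and apply the corresponding $C$-generator there. Commuting $t$ to the right shows every intermediate configuration equals $\beta_j t$ with $\beta_j\in\gen{\phi_F(S_C)}$ ranging over a fixed finite set; hence, as $|z|\to\infty$, each $\beta_jt\to\infty$ in $C$ and $\fl{\beta_jt}\to\infty$. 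Now I bound the total variation of $f$ along this path: each of the boundedly many $H$-edges at configuration $\beta_jt$ is at most $\eps_{\fl{\beta_jt}}$ by Lemma \ref{tepscl}, while each near $C$-toggle $f(\beta_{j}t\,\phi_{w_j}(s_j)^{\eta_j},w_j)-f(\beta_jt,w_j)$ is the value at $\beta_jt$ of a function lying in $\ell^p(C)\subset c_0(C)$, being a sub-collection of the entries of $\nabla f$, so it too tends to $0$ as $\beta_jt$ leaves every finite set. Thus $|f(c_1t,e_H)-f(c_2t,e_H)|\to0$, and with the slice estimate at the endpoints (whose $\fl{\cdot}$ again $\to\infty$) we get $|\bar f(c_1t)-\bar f(c_2t)|\to0$.

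Combining the three pieces by the triangle inequality and letting $n\to\infty$, the middle term vanishes and the two carrier steps contribute $K_1\eps_{\fl{c_1}}$ and $K_1\eps_{\fl{c_2}}$, giving $|\bar f(c_1)-\bar f(c_2)|\leq K_1(\eps_{\fl{c_1}}+\eps_{\fl{c_2}})$; as $\eps_n\to0$ this makes $c\mapsto\bar f(c)$ Cauchy along the filter $\fl c\to\infty$, so the limit exists. The main obstacle is precisely the middle step: a near difference $k$ cannot be toggled cheaply on its own, and the whole point of \eqref{hTC} is to supply a far lamp $t$ that (i) is new, so appending it forces $\fl{\cdot}\to\infty$ and renders both the $H$-edges (via Lemma \ref{tepscl}) and the near $C$-gradients (via membership in $c_0$) negligible, and (ii) commutes with $\phi_F(S_C)$, so appending it leaves the near word $k$ unchanged and keeps the supports of all intermediate configurations inside the fixed set $F$ under control.
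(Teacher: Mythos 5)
Your architecture is the same as the paper's (append a far commuting ``carrier'' lamp supplied by \eqref{hTC}, compare the shifted configurations out at infinity where every step is cheap, then remove the carrier at cost $K_1\eps_{\fl{c_i}}$), but there is a genuine gap at the single point that makes all of your error terms vanish: nothing in your choice of $t_n$ implies $\fl{c_it_n}\to\infty$, $\fl{\beta_jt_n}\to\infty$, or that $\beta_jt_n$ leaves every finite subset of $C$. You invoke \eqref{hTC} for the \emph{fixed} finite set $F$, so all you know is $t_n\notin\gen{\phi_F(S_C)}$ and $|z_n|\to\infty$ in $H$; ``new relative to $F$'' together with ``far in $H$'' does not give ``far in $C$''. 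Indeed \eqref{hTC} permits all the $t_n$ to be one and the same element of $C$: take $L$ a nontrivial finite group, $C=\bigoplus_{\zz}L$, and $H=\zz^3$ acting through the projection $\pi:\zz^3\to\zz$ followed by the shift (so $G\cong (L\wr\zz)\times\zz^2$, and all hypotheses of the theorem hold with $p=1$). For $F=\{0\}$ the infinitely many elements $z_n=(1,n,0)$ satisfy the conclusion of \eqref{hTC} and $|z_n|\to\infty$, yet $t_n=\phi_{z_n}(s)$ is the lamp $s$ at position $1\in\zz$ for \emph{every} $n$. With that (permitted) choice, $\fl{c_it_n}$ is a bounded constant, so $\eps_{\fl{c_it_n}}$ does not vanish and the carrier step fails; likewise the $\beta_jt_n$ range in a fixed finite subset of $C$, so membership of the near toggles in $c_0(C)$ yields nothing. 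Every limit in your argument then breaks.

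The repair is to build the farness in $C$ into the choice of the carrier, which is exactly the role played in the paper by the requirement $z\notin\srl{\Sigma}$, $\Sigma=\supp c_1\cup\supp c_2$, and the accompanying claim $\fl{c\cdot\phi_z(s)}\geq |z|$: the carrier must be new relative to everything in sight, not merely relative to $F$. In your setup, apply \eqref{hTC} to the exhausting sequence $F_n:=F\cup S_H^n$ instead of to $F$: this produces $t_n=\phi_{z_n}(s_n)$ commuting with $\phi_F(S_C)$ (hence with $c_1,c_2,k$ and all $\beta_j$) and with $t_n\notin\gen{\phi_{S_H^n}(S_C)}$, \ie $\fl{t_n}>n$. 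Since the $\beta_j$ lie in a fixed finite set, once $\fl{t_n}>\max_j\fl{\beta_j}$ one gets $\fl{\beta_jt_n}\geq\fl{t_n}$ (if $\beta_jt_n\in\gen{\phi_{S_H^r}(S_C)}$ then $t_n=\beta_j^{-1}(\beta_jt_n)\in\gen{\phi_{S_H^{\max(r,\fl{\beta_j})}}(S_C)}$, so $\fl{t_n}\leq\max(r,\fl{\beta_j})$); hence $\fl{c_it_n},\fl{\beta_jt_n}\to\infty$ and $\beta_jt_n$ eventually leaves any finite subset of $C$. With this one change the rest of your proof goes through: your total-variation bound along an explicit path, with Lemma \ref{tepscl} for the $H$-edges and the explicit $\ell^p(C)\subset c_0(C)$ argument for the near $C$-toggles, is a correct variant of the paper's direct slice-by-slice comparison of the constants, and at the near-toggle step it is in fact more carefully justified than the paper's own citation.
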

\begin{proof}
We need to show that the constants $\text{cst}_{c_i}$ corresponding to $c_1$ and $c_2\in C$ are close. Let $|c_2^{-1}c_1|$ be the distance from $c_1$ to $c_2$ (in the infinitely generated Cayley graph of $C$ for the generating set $\phi_H(S_C)$). 

Fix some $\eps >0$ and assume for simplicity that $K_1 \geq 1$.

Let $z$ be so that $\nu := \max( \eps'_{|z|} , \eps_{|z|}) < \eps / K_1 (3|c_2^{-1}c_1| + 5)$ and $z$ lies outside $\srl{\Sigma}$ where $\Sigma = \supp c_1 \cup \supp c_2$. Since, for some $s \in S_C$ and any $c$ with $\srl{\supp c} \subset \srl{\Sigma}$, $\fl{c \cdot \phi_z(s)} \geq |z|$, by Lemmas \ref{tepscl} and \ref{tbdval},
\[
|f(c \cdot \phi_z(s),w) - \text{cst}_{c \cdot \phi_z(s)}| < K_1 \nu \quad \text{for any } w \in H \text{ and any } c \text{ with } \srl{\supp c} \subset \srl{\Sigma}.
\]
Also, by Lemma \ref{tepshl}, for any $c \in C$, $|f(c,z) - f(c \cdot \phi_z(s),z)| < \nu$. Thus, for any $s' \in S_C$, for any $w \in \srl{\Sigma}$ and any $c$ with $\srl{\supp c} \subset \srl{\Sigma}$,
\[
\begin{array}{l}
|\text{cst}_{c \cdot \phi_w(s') \cdot \phi_z(s)} - \text{cst}_{c \cdot \phi_z(s)}| \\
\qquad \leq |\text{cst}_{c \cdot \phi_w(s') \cdot \phi_z(s) } - f(c \cdot \phi_w(s') \cdot \phi_z(s) ,w)| \\
\qquad \qquad + |f(c \cdot \phi_w(s') \cdot \phi_z(s),w) - f(c \cdot \phi_z(s),w)| \\
\qquad \qquad + |f(c \cdot \phi_z(s),w) - \text{cst}_{c \cdot \phi_z(s)}| \\
\qquad \leq K_1 \nu + |f(c \cdot \phi_z(s) \cdot \phi_w(s'),w)  - f(c \cdot \phi_z(s),w)| +K_1 \nu.\\
\qquad \leq 2 K_1 \nu + \nu.\\
\end{array}
\]
where the above estimate as well as hypothesis \eqref{hTC} was used in the second inequality and Lemma \ref{tepscl} was used in the third inequality.
Hence, one has
\[
|\text{cst}_{c_1 \cdot \phi_z(s)} - \text{cst}_{c_2 \cdot \phi_z(s)}| \leq 3 K_1 |c_2^{-1}c_1| \nu.
\]
Finally,
\[
\begin{array}{rl}
|\text{cst}_{c_i \cdot \phi_z(s)} - \text{cst}_{c_i}|
& \leq |\text{cst}_{c_i \cdot \phi_z(s)} - f(c_i \cdot \phi_z(s),z)|  \\
& \qquad + |f(c_i \cdot \phi_z(s),z) - f(c_i,z)|  \\
& \qquad + |f(c_i,z) - \text{cst}_{c_i}| \\
& \leq (K_1 +1) \nu + K_1 \eps_{\fl{c_i}}.
\end{array}
\]
So
\[
\begin{array}{rl}
|\text{cst}_{c_1} - \text{cst}_{c_2}| 
& \leq  K_1 \big[ (3 |c_2^{-1}c_1| + 4) \nu +\eps_{\fl{c_1}} +\eps_{\fl{c_2}} \big]\\
& < K_1 \big( \eps + \eps_{\fl{c_1}} +\eps_{\fl{c_2}} \big).
\end{array}
\]
Since the above holds for any $\eps >0$, the conclusion follows.

To see the limit exists, note that the sequence is Cauchy.
\end{proof}

\begin{proof}[Proof of theorem \ref{leteo}]
First, the proof is done for the generating set build with $S_C$ and $S_H$. Let $B_n$ be a sequence of balls centred at the identity in $G$. Say a function has only one value at infinity if, up to changing $f$ by a constant function, $f( B_n^\comp) \subset ]-\sigma_n,\sigma_n[$ for some sequence of positive numbers $\sigma_n$ tending to $0$.

Take $f \in \HD^p(G)$. If $f$ takes only one value at infinity, then $f$ is constant (by the maximum principle). 

Thus, the theorem follows if we show any $f \in \D^p(G)$ has one value at infinity (it is not even required that $f$ be harmonic). Change $f$ by a constant so that the function $\bar{f}$ from Theorem \ref{tcstinft} tends to $0$ as $\fl{c}\to \infty$. This implies 
\[
|f(c,z)| = |f(c,z) - \text{cst}_c  + \text{cst}_c| \leq 3K_1 \eps_{\fl{c}},
\]
by bounding the first term as in the proof of Theorem \ref{tcstinft} and the second by the result of Theorem \ref{tcstinft}.
It remains to check that $f(c,z)$ also tends to $0$ as $|z| \to \infty$. Assume $z \notin S_H^{\fl{c}}$ (\ie $|z| > \fl{c}$), then, using the same bounds and Lemma \ref{tepshl}, 
\[
\begin{array}{rl}
|f(c,z)| 
&\leq |\text{cst}_{c \cdot \phi_z(s)}| + |\text{cst}_{c \cdot \phi_z(s)} - f(c \cdot \phi_z(s),z)| \\
& \qquad + |f(c \cdot \phi_z(s),z) - f(c,z) |  \\
& \leq 2K_1 \eps_{|z|} + K_1 \eps_{|z|} + \eps'_{|z|} \\
& \leq 3K_1\eps_{|z|} + \eps'_{|z|}.
\end{array}
\]
Thus $f$ has only one value at infinity.

If one considered another generating set, then a simple way is to do as follows. Note that $G$ is not virtually nilpotent, hence satisfies a $d$-dimensional isoperimetric profile for any $d$. By \cite[Theorem 1.4]{moi-poiss}, for any Cayley graph $\Gamma$ of $G$, one has: the reduced $\ell^p$-cohomology in degree one of $\Gamma$ is non-trivial for some $p \in [1,\infty[$ if and only if $\HD^q(\Gamma) \not\simeq \rr$ for some $q \in [1,\infty[$. Since the reduced $\ell^p$-cohomology in degree one is an invariant of quasi-isometry (in particular, of the choice of generating set) the result follows for other generating sets.
\end{proof}

\section{Some examples and questions}

\subsection{Examples}\label{s-exe}
Le us rewrite Theorem \ref{leteo}.
\begin{cor}
Let $G = C \rtimes_\phi H$, assume $C$ is not finitely generated but $G$ is and that hypothesis \eqref{hTC} holds. 
\begin{itemize}
\item If $H$ has polynomial growth of degree $d$, then, for all $p \in [1,d/2[$, $\HD^p(G)$ contains only constant functions.
\item If $H$ has intermediate growth, then, for all $p \in [1,\infty[$, $\HD^p(G)$ contains only constant functions.
\item If $H$ has exponential growth and $p \in [1,\infty[$ and let $1 \leq p < q \leq \infty$. Then $\BHD^q(H) \simeq \rr$ implies $\HD^p(G) \simeq \rr$. Also $\HD^p(H) \simeq \rr$ implies $\HD^p(G) \simeq \rr$.
\end{itemize}
\end{cor}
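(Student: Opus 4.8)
The plan is to regard this corollary purely as a case-by-case specialization of Theorem~\ref{leteo}: once $G = C \rtimes_\phi H$ is fixed with $C$ not finitely generated, $G$ finitely generated and \eqref{hTC} in force, the only remaining task in each regime is to verify the two standing hypotheses of that theorem, namely that $H$ has growth at least polynomial of degree $d \geq 2p$ and, when $H$ is of superpolynomial growth, that $\BHD^q(H) \simeq \rr$ for some $q > p$.

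First I would dispatch the polynomial case: if $H$ has polynomial growth of degree $d$, then $p \in [1,d/2[$ is exactly the inequality $2p < d$, so the degree hypothesis holds (even strictly, as Lemma~\ref{tbdval} wants), and since $H$ is not of superpolynomial growth there is nothing further to check, so Theorem~\ref{leteo} applies directly. For the intermediate case, superpolynomiality means $\lvert B_n \rvert$ eventually dominates every polynomial, so $H$ has growth at least polynomial of every degree, in particular of degree $2p$ for every $p$; this settles the first hypothesis for all $p \in [1,\infty[$. For the second I would invoke the classical fact that a finitely generated group of subexponential growth is Liouville (the exponential growth rate vanishes, so the Avez entropy vanishes and the Poisson boundary is trivial). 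Every bounded harmonic function on such an $H$ is then constant, whence $\BHD^q(H) \simeq \rr$ for all $q$, in particular for some $q > p$, and Theorem~\ref{leteo} gives $\HD^p(G) \simeq \rr$ for every $p$.

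The exponential case is where the work concentrates. Its first half is immediate: superpolynomiality again yields the degree hypothesis for all $p$, and the assumption $\BHD^q(H) \simeq \rr$ with $q > p$ is verbatim the second hypothesis, so Theorem~\ref{leteo} gives $\HD^p(G) \simeq \rr$. The ``also'' clause, deducing $\HD^p(G) \simeq \rr$ from $\HD^p(H) \simeq \rr$ at the same exponent, is the step I expect to be the main obstacle, because of an exponent mismatch: triviality of $\HD^p(H)$ only transmits downward in the exponent (as $\ell^{p'} \subseteq \ell^p$ for $p' \leq p$), whereas Theorem~\ref{leteo} asks for control at some $q > p$. My approach would be to bypass the hypothesis of Theorem~\ref{leteo} and instead return to the single place where it is used, the constancy of the slice-limits $\tilde f(c,\cdot) = \lim_n P_H^n f(c,\cdot)$ in Theorem~\ref{tcstinft}. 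By Lemma~\ref{tbdval} (stated for growth at least polynomial of degree $d > 2p$, hence available here for $d$ arbitrarily large) each such limit is harmonic with gradient in $\ell^q$ for $q \geq \tfrac{pd}{d-2p}$, an exponent that can be pushed arbitrarily close to $p$ but not down to it. The crux is therefore to show that these slice-limits in fact lie in $\HD^p(H)$ --- equivalently, that the tail $f(c,\cdot) - \tilde f(c,\cdot)$ has $\ell^p$-gradient --- so that $\HD^p(H) \simeq \rr$ forces them to be constant and the remainder of the arguments of Theorem~\ref{tcstinft} and Theorem~\ref{leteo} carry over unchanged. Establishing this $\ell^p$-gradient bound on the tail, which amounts to an $\ell^p \to \ell^p$ estimate for the gradient of the Green operator and is delicate precisely when $H$ is amenable of exponential growth, is where the real difficulty lies; alternatively one might try to promote $\HD^p(H) \simeq \rr$ to $\BHD^q(H) \simeq \rr$ for some $q > p$ and apply Theorem~\ref{leteo} as a black box, but that promotion runs against the same exponent obstruction and I would not expect it to be the easier route.
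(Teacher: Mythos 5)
Your reduction of the first two bullets and of the first sentence of the third to Theorem~\ref{leteo} is exactly what the paper intends (the corollary is stated there as a mere rewriting of that theorem, with no separate proof): polynomial growth of degree $d$ with $p<d/2$ gives $d>2p$ and nothing further to check; intermediate growth gives growth at least polynomial of every degree, and the classical fact that subexponential growth forces zero Avez entropy, hence the Liouville property, gives $\BHD^q(H)\simeq\rr$ for every $q$; exponential growth with $\BHD^q(H)\simeq\rr$, $q>p$, is verbatim the hypothesis of the theorem. Up to there your proposal is correct and matches the paper.

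The genuine gap is the ``Also'' clause, which you leave unproven. Your diagnosis of the exponent mismatch is right, but you rank the two possible repairs the wrong way. The direct route you pursue --- showing the tail $f(c,\cdot)-\tilde f(c,\cdot)$ has gradient in $\ell^p$ --- amounts to an $\ell^p$-bound on the gradient of the Green operator (a discrete Riesz-transform estimate), which is not available for general amenable groups of exponential growth; that path cannot be completed with the tools of this paper or of \cite{moi-poiss}. The promotion route you dismiss is the intended one, and it does not ``run against the same exponent obstruction'', because the exponent gap is crossed inside \cite[Theorem 1.2]{moi-poiss} by passing through reduced $\ell^{p'}$-cohomology: for a group of superpolynomial growth that theorem ties the triviality of $\BHD^q(H)$, for \emph{all} $q$ in the admissible range $\big]\tfrac{dp'}{d-2p'},\infty\big]$ (and $d$ may be taken arbitrarily large, so this means all $q>p'$), to one and the same statement, the vanishing of the reduced $\ell^{p'}$-cohomology in degree one. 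Remark~\ref{rbhd} quotes one direction of this; the equivalence is also what underlies the restatement of Gromov's question in the introduction. Concretely: $\HD^p(H)\simeq\rr$ gives $\BHD^p(H)\simeq\rr$; applying the equivalence with an exponent $p'<p$ (for which $p$ lies in the admissible range) yields vanishing of the reduced $\ell^{p'}$-cohomology of $H$, hence $\BHD^q(H)\simeq\rr$ for every $q>p'$, in particular for some $q>p$; now the first sentence of the bullet, i.e.\ Theorem~\ref{leteo}, applies and gives $\HD^p(G)\simeq\rr$. Without importing that result from \cite{moi-poiss}, your proposal does not establish the ``Also'' clause; with it, the clause is a short formal consequence of the part you already proved.
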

\begin{exa}
The classical example is to take $L$ finitely generated group (``lamp state'') and $C = \oplus_H L$ with $H$ acting by shifting the index. $C$ is the ``lamp configuration group'', and the semi-direct product is called a lamplighter group. If $H$ and $L$ are finitely generated, then $S_C$ can be picked to be the generating set of $L$ (at the index $e_H$). This groups satisfies \eqref{hTC} since subgroups which are ``far away'' will be in different factors of the direct product, and their elements commute.

Georgakopoulos \cite{Agelos} showed lamplighter \emph{graphs} do not have harmonic functions with gradient in $\ell^2$. His methods extends to harmonic functions with gradient in $\ell^p$. However, the lamp groups must be finite.

Using Theorem \ref{leteo}, \cite[\S{}4]{moi-trans}, \cite[Theorem.(iv)]{MV} and work of Georgakopoulos \cite{Agelos}, one may readily check that the only lamplighter groups for which it is not proven that $\HD^p(\Gamma) \simeq \rr$ for any $p \in [1,\infty[$ are those where $L$ is infinite amenable and either 1- $H$ is of polynomial growth and not virtually Abelian or 2- $H$ has $\HD^p(H) \not\simeq \rr$.
\end{exa}
Though the proof was not done in this generality, Theorem \ref{leteo} extends almost \emph{verbatim} to the case of lamplighter graphs. The correct hypothesis is that the graph $H$ must have $\mathrm{IS}_d$ for $d >2p$ and either: 1- $\HD^p(H) \simeq \rr$ or 2- $\BHD^q(H) \simeq \rr$ for some $q \in \big] \frac{dp}{d-2p}, \infty \big]$. This gives a partial answer to a problem raised by Georgakopoulos \cite[Problem 3.1]{Agelos}.
\begin{exa}\label{exsym}
\newcommand{\sym}[0]{\ensuremath{\mathrm{Sym}}}
Another classical example is to take $\sym_H$ to be the permutations $H \to H$ which are not the identity only on a finite set. There is a natural action (say, on the right) of $H$ on itself by permutation. This gives $G = \sym_H \rtimes_\phi H$ which is finitely generated (although $\sym_H$ is not finitely generated).
\end{exa}

\subsection{Further comments and questions}\label{s-comq}

A simple way to show that the gradient of a harmonic function is not in $\ell^p$ is to think in terms of electric currents. This is essentially the method used by Georgakopoulos \cite{Agelos} to show lamplighter \emph{graph} do not have harmonic functions with gradient in $\ell^2$. Indeed, if one exhibits ``many'' paths which are ``not too long'' between points where the potential is $\geq 5/8$ and points where its $\leq 3/8$, then one gets a lower bound on the gradient.

Note that, for groups, using \cite[Theorem 1.2]{moi-poiss}, it is sufficient to consider the case of bounded harmonic functions. Still, assume for simplicity that $f$ is a bounded harmonic function. Then, up to normalisation, its values are between $0$ and $1$. Let $N = f^{-1}[0,3/8]$ and $P=f^{-1}[5/8,1]$ (both are infinite sets). Let $k_n$ be the maximal number of edge-disjoint paths of length $\leq n$ between $N$ and $P$. Then the $\ell^p$ norm of the current is at least 
\[
\frac{1}{4} \sum_{\text{paths}} \frac{1}{\text{length of the paths}} \geq \frac{ k_n }{ n^p}
\]
Let $p_c := \inf \{ p \mid \HD^p(\Gamma) \not\simeq \rr \}$. Then, $\forall \eps>0$, $\limm{n \to \infty} \frac{k_n}{n^{p_c+\eps}} = 0$. 
This gives an amusing view of the critical exponent from $\ell^p$-cohomology, see Bourdon, Martin \& Valette \cite{BMV} or Bourdon \& Pajot \cite{BP}.
\begin{quest}
Given a group of exponential growth and divergence rate $n \mapsto n^d$, is it possible to show that $k_n$ grows exponentially?
\end{quest}
Indeed, there are always two geodesic rays $\{x_n\}$ and $\{y_n\}$ with $f(x_n) \to 1$ and $f(y_n) \to 0$. If the divergence does not grow too quickly, then there should be [exponentially] many paths of distance roughly $K n^{1/d}$ between those (for some $K>0$).

\end{document}